\def\doi#1{{\small\href{https://doi.org/#1}{\path{doi:#1}}}}
\def\arxiv#1{{\small\href{http://www.arxiv.org/abs/#1}{\path{arXiv:#1}}}}
\def\url#1{{\small\href{#1}{\path{#1}}}}
\theoremstyle{plain}
\newtheorem{theorem}{\bf Theorem}[section]
\newtheorem{proposition}[theorem]{\bf Proposition}
\newtheorem{lemma}[theorem]{\bf Lemma}
\newtheorem{corollary}[theorem]{\bf Corollary}
\theoremstyle{definition}
\newcommand{\N}{\mathbb N}
\newcommand{\Z}{\mathbb Z}
\newcommand{\Q}{\mathbb Q}
\newcommand{\red}{{\text{\rm red}}}
\renewcommand{\t}{\, | \,}
\newcommand{\fin}{\text{\rm fin}}
\numberwithin{equation}{section}
\subjclass[2010]{13A05, 13F05, 20M13}
\thanks{This work was supported by the Austrian Science Fund FWF, Project P33499-N}
\begin{document}

\title{A realization result for systems of sets of lengths}

\author{Alfred Geroldinger and  Qinghai Zhong}

\address{University of Graz, NAWI Graz \\
Institute for Mathematics and Scientific Computing \\
Heinrichstra{\ss}e 36\\
8010 Graz, Austria}
\email{alfred.geroldinger@uni-graz.at,  qinghai.zhong@uni-graz.at}
\urladdr{https://imsc.uni-graz.at/geroldinger, https://imsc.uni-graz.at/zhong/}

\keywords{sets of lengths, Krull monoids, Dedekind domains}

\begin{abstract}
Let $\mathcal L^*$ be a family of finite subsets of $\N_0$ having the following properties.
\begin{itemize}
\item[(a)] $\{0\}, \{1\} \in \mathcal L^*$ and all other sets of $\mathcal L^*$ lie in $\N_{\ge 2}$.

\item[(b)] If $L_1, L_2 \in \mathcal L^*$, then the sumset $L_1 + L_2 = \{l_1+l_2 \colon l_1 \in L_1, l_2 \in L_2\} \in \mathcal L^*$.
\end{itemize}
We show that there is a Dedekind domain $D$ whose system of sets of lengths equals $\mathcal L^*$.
\end{abstract}

\maketitle


\section{Introduction} \label{1}

Let $D$ be a domain or a monoid and suppose that every non-zero  non-invertible element can be written as a product of irreducible elements. The existence of such a factorization follows, among others, from weak ideal theoretic conditions on $D$. In general, factorizations are not unique. Sets of lengths have received a lot of attention because of their usefulness in describing the non-uniqueness of factorizations. To fix notation, let $a \in D$ be a non-zero non-invertible element. If $a = u_1 \cdot \ldots \cdot u_k$, where $k \in \N$ and $u_1, \ldots, u_k$ are irreducible elements of $D$, then $k$ is called a factorization length and the set $\mathsf L (a) \subset \N$ of all possible factorization lengths denotes the set of lengths of $a$. If $a$ is irreducible, then $\mathsf L (a)= \{1\}$ and it is convenient to  set $\mathsf L (a) = \{0\}$ for invertible elements $a \in D$. Then
\[
\mathcal L (D) = \{ \mathsf L (a) \colon a \ \text{is a nonzero element of $D$} \}
\]
denotes the {\it system of sets of lengths} of $D$. In an overwhelming number of settings studied so far, sets of lengths are finite. In particular, if $D$ is a commutative Noetherian domain (more generally, a commutative Mori domain or a commutative Mori monoid), then sets of lengths are finite.

We discuss basic properties of $\mathcal L (D)$. To do so, let us suppose now that $D$ is a commutative integral domain. Then $D$ is factorial if and only if $D$ is a Krull domain with trivial class group, and in this case we have
\[
\mathcal L (D) = \big\{ \{k\} \colon k \in \N_0 \big\} \,.
\]
Domains with this property are called half-factorial and they found wide attention in the literature (see \cite{Ch-Co00,Co05a,Co-Sm11a,Pl-Sc05b} for surveys and recent contributions). Among others, Krull domains, whose class group has at most two elements, are half-factorial. Suppose that $D$ is not half-factorial. Then, there is $a \in D$ such that $|\mathsf L (a)| > 1$ and hence, for every $n \in \N$, the $n$-fold sumset $\mathsf L (a) + \ldots + \mathsf L (a) \subset \mathsf L (a^n)$. Thus, $|\mathsf L (a^n)|>n$ and so sets of lengths can be arbitrarily large. However, in Krull domains with finite class group, sets of lengths are well-structured. They are almost arithmetical multiprogressions with global bounds for all parameters. The same is true for various classes of domains and we refer to \cite[Section 4.7]{Ge-HK06a} for a survey. In contrast to this, there are domains where every finite subset of $\N_{\ge 2}$ occurs as a set of lengths. Krull domains with infinite class group, where each class contains a height-one prime ideal (which holds true for all cluster algebras that are Krull \cite{Ga-La-Sm19a}),  classes of integer-valued polynomials, and others have this property (\cite{Ka99a}, \cite[Theorem 7.4.1]{Ge-HK06a}, \cite{Fr13a, Fr-Na-Ri19a}, \cite[Theorem 3.6]{Go19a}).

A standing open problem is to understand which families $\mathcal L^*$ of finite subsets of the non-negative integers occur as the system of sets of lengths of a monoid or domain (\cite[Problem B]{Ge-Zh20a}). If $1_D \in D$ is the identity element, then (as we have already said)  $\mathsf L (1_D) = \{0\}$. Let $a, b \in D$. Then $a$  is irreducible if and only if $1 \in \mathsf L (a)$ if and only if $\mathsf L (a) = \{1\}$. If $a = u_1 \cdot \ldots \cdot u_k$ and $b = v_1 \cdot \ldots \cdot v_{\ell}$, where all $u_i$ and $v_j$ are irreducibles, then $ab= u_1 \cdot \ldots \cdot u_kv_1 \cdot \ldots \cdot v_{\ell}$, whence $\mathsf L (a) + \mathsf L (b) \subset \mathsf L (ab)$. Thus, if $\mathcal L^*$ is a system of sets of lengths, then it satisfies the following two properties.
\begin{itemize}
\item[(a)] $\{0\}, \{1\} \in \mathcal L^*$ and all other sets of $\mathcal L^*$ lie in $\N_{\ge 2}$.

\item[(b)] If $L_1, L_2 \in \mathcal L^*$, then $L_1 + L_2 \subset L_3$ for some $L_3 \in \mathcal L^*$.
\end{itemize}

In the present paper, we show a partial converse. Indeed, if a family $\mathcal L^*$ satisfies (a) and (b) and if, in addition, the sumset $L_1+L_2$ is not only contained in a set of $\mathcal L^*$ but is actually in $\mathcal L^*$, then $\mathcal L^*$ is a system of sets of lengths. We formulate the main result of the present paper.

\medskip
\begin{theorem} \label{1.1}
Let $\mathcal L^*$ be a family of finite subsets of $\N_0$ having the following properties.
\begin{itemize}
\item[(a)] $\{0\}, \{1\} \in \mathcal L^*$ and all other sets of $\mathcal L^*$ lie in $\N_{\ge 2}$.

\item[(b)] If $L_1, L_2 \in \mathcal L^*$, then the sumset $L_1 + L_2 \in \mathcal L^*$.
\end{itemize}
Then there is a Krull monoid $H$ such that $\mathcal L (H) = \mathcal L^*$. Moreover, there is a finitely generated monoid $H^*$ (equivalently, a finitely generated Krull monoid $H^*$)  with $\mathcal L (H^*) = \mathcal L ^*$ if and only if $\mathcal L^*$ has only finitely many indecomposable sets.
\end{theorem}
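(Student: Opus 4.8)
The strategy is to realize $\mathcal{L}^*$ as $\mathcal{L}(\mathcal{B}(G_0))$ for a suitable subset $G_0$ of an abelian group, and then to pass to a Dedekind domain via a realization theorem for the pair (class group, set of classes containing prime divisors). First I would record the monoid structure of $\mathcal{L}^*$: by (b) it is closed under sumsets, its only unit is the identity $\{0\}$ (since $\min(L_1+L_2)=\min L_1+\min L_2$), and it is atomic because in a proper factorization $L=L_1+L_2$ one has $\max L_1,\max L_2<\max L$, so proper factorization chains terminate. Hence $\mathcal{L}^*$ is the closure under sumsets of its set $\mathcal{A}$ of atoms (the \emph{indecomposable} members), $\{1\}\in\mathcal{A}$, and -- as $\{m\}=\{1\}+\cdots+\{1\}$ is decomposable for $m\ge 2$ -- every $A\in\mathcal{A}\setminus\{\{1\}\}$ satisfies $A\subseteq\N_{\geq 2}$ and $|A|\ge 2$.

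The construction then reduces to an elementary case via a direct-sum trick. For $A\in\mathcal{A}$ let $\mathcal{M}_A$ be the closure under sumsets of $\{\{1\},A\}$; it again satisfies (a),(b). Suppose we have produced, for each $A\in\mathcal{A}$, a finite abelian group $G_A$ and a finite generating subset $G_{0,A}\subseteq G_A$ with $\mathcal{L}(\mathcal{B}(G_{0,A}))=\mathcal{M}_A$ (for $A=\{1\}$ take $G_{0,A}=\{0\}$, where $\mathcal{B}(G_{0,A})\cong(\N_0,+)$ is half-factorial). Put $G=\bigoplus_{A\in\mathcal{A}}G_A$ and $G_0=\bigcup_{A\in\mathcal{A}}G_{0,A}\subseteq G$. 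A minimal zero-sum sequence over $G_0$ is supported in a single summand (each of its projections to a summand is a zero-sum subsequence, so minimality forces all but one to be empty); thus $\mathcal{B}(G_0)$ is the monoid direct sum of the $\mathcal{B}(G_{0,A})$, and for $B=\prod_A B_A$ the factorizations of $B$ are the products of the factorizations of the $B_A$, so $\mathsf{L}(B)=\sum_A\mathsf{L}(B_A)$. Hence $\mathcal{L}(\mathcal{B}(G_0))$ is the closure under sumsets of $\bigcup_A\mathcal{M}_A$, i.e. of $\{\{1\}\}\cup\mathcal{A}$, i.e. $\mathcal{L}^*$. Since $\mathcal{B}(G_0)$ is Krull this proves the first assertion, and a Dedekind domain with class group $G$ and prime divisors distributed over the classes of $G_0$ gives the statement from the abstract.

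The main obstacle is the elementary case: given finite $A=\{a_1<\cdots<a_r\}\subseteq\N_{\geq 2}$, construct a finite $G_{0,A}$ inside a finite abelian group with $\mathcal{L}(\mathcal{B}(G_{0,A}))=\mathcal{M}_A$. Here I would build on the known constructions realizing a single prescribed finite set as a set of lengths in a Krull monoid with finite class group, and engineer $G_{0,A}$ so that $\mathcal{B}(G_{0,A})$ carries a distinguished block $B_A$ with $\mathsf{L}(B_A)=A$ while producing \emph{no} unexpected sets of lengths, so that the only non-half-factorial behaviour stems from multiplying copies of $B_A$. The hard part is exactly verifying $\mathcal{L}(\mathcal{B}(G_{0,A}))=\mathcal{M}_A$: this amounts to a careful, in principle elementary, combinatorial analysis of all minimal zero-sum sequences over $G_{0,A}$ and of the length sets of their products.

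Finally, the "Moreover". If $\mathcal{A}$ is finite then $G_0$ is finite, so by Dickson's Lemma $\mathcal{B}(G_0)$ has finitely many atoms and is a finitely generated Krull monoid -- the required $H^*$. Conversely, assume a finitely generated monoid $H^*$ satisfies $\mathcal{L}(H^*)=\mathcal{L}^*$; passing to its reduced monoid, fix its finitely many atoms. One then invokes the finiteness of the arithmetical invariants of $H^*$ -- finiteness of the set of distances and the fact that sets of lengths in $H^*$ are almost arithmetical multiprogressions with globally bounded parameters -- to show that every sufficiently long set of lengths decomposes as a proper sumset inside $\mathcal{L}^*=\mathcal{L}(H^*)$; hence indecomposable sets have bounded maximum, each is realized by a product of boundedly many atoms of $H^*$, and there are only finitely many of them. (A complementary view of the tension: if $\mathcal{A}$ were infinite, picking $a_A\in H^*$ with $\mathsf{L}(a_A)=A$ and applying Dickson's Lemma to their factorizations produces an infinite divisibility chain $a_{A_1}\t a_{A_2}\t\cdots$ with each $\mathsf{L}(a_{A_i})$ indecomposable, forcing $|\mathsf{L}(a_{A_i})|$ to increase strictly -- incompatible with the boundedness above.) Together the two directions also yield the equivalence with finitely generated \emph{Krull} monoids.
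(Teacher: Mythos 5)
Your overall architecture coincides with the paper's: decompose $\mathcal L^*$ into its indecomposable sets, realize for each indecomposable $A$ the elementary system $\mathcal M_A=\big\{\{0\},\{1\}\big\}\cup\{y+nA\colon y,n\in\N_0\}$ by a reduced finitely generated Krull monoid, take the coproduct, and for the converse use finiteness properties of finitely generated monoids. However, the two steps that carry essentially all of the content are asserted rather than proved. First, the ``elementary case'' you defer is exactly the paper's Proposition \ref{3.1}, and it does not follow from the known single-set realization results you invoke: those produce one block $B_A$ with $\mathsf L(B_A)=A$ but give no control over the remaining sets of lengths of $\mathcal B(G_{0,A})$ (over a finite group one typically picks up many unwanted length sets), and ruling these out is precisely the difficulty. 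The paper handles it by an explicit induction: starting from the free abelian monoid on $u_{1,1},\ldots,u_{1,k_1}$, it adjoins for each further $k_i$ new atoms $u_{i,1},\ldots,u_{i,k_i-1}$ together with $u_{i,k_i}:=u_{1,1}\cdots u_{1,k_1}\,(u_{i,1}\cdots u_{i,k_i-1})^{-1}$, and then verifies root-closedness (hence Krull), unique factorization for every element not divisible by $u_{1,1}\cdots u_{1,k_1}$, and the exact description of $\mathsf Z\big((u_{1,1}\cdots u_{1,k_1})^n\big)$. Without some such construction and verification your proof of the first assertion is incomplete; note also that your extra requirement that the class group $G_A$ be finite is not needed for the theorem and it is not clear it can be met.

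Second, in the converse of the ``Moreover'' you claim that finiteness of $\Delta(H^*)$ together with the almost-arithmetical-multiprogression structure theorem already yields that every sufficiently large $L\in\mathcal L(H^*)$ splits as a proper sumset inside $\mathcal L(H^*)$. That is not a consequence of the structure theorem alone: it describes the shape of a single $\mathsf L(a)$ but produces no divisor $b$ of $a$ with $\mathsf L(a)=\mathsf L(b)+\mathsf L(ab^{-1})$. The paper's Proposition \ref{3.3} gets such decompositions by combining the structure theorem with finite generation of the fiber product $H_0=\{(x,y)\in\mathsf Z(H)\times\mathsf Z(H)\colon \pi(x)=\pi(y)\}$: a Dickson-type comparison of exponent vectors of its atoms yields elements $b\mid c$ along which both $\max\mathsf L$ and $\min\mathsf L$ add, and only then a careful computation with the AAMP parameters upgrades the inclusion $\mathsf L(b)+\mathsf L(cb^{-1})\subset\mathsf L(c)$ to equality. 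Your parenthetical Dickson-chain argument does not close this gap: divisibility $b\mid c$ gives only the inclusion above, which neither makes $\mathsf L(c)$ decomposable in $\mathcal L^*$ nor forces $|\mathsf L(c)|>|\mathsf L(b)|$ (the inclusion can be strict, and $\mathsf L(cb^{-1})$ can be a singleton). So both the single-set realization and the finitely-generated converse remain genuine gaps in the proposal.
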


If $a, b \in D$, then (as mentioned above) $\mathsf L (a) + \mathsf L (b) \subset \mathsf L (ab)$ and, in general, the containment is strict. For Krull monoids having prime divisors in all classes, there is a characterization (in terms of the class group) when  Property (b) is satisfied (\cite{Ge-Sc19d}).  Examples from module theory, which satisfy Property (b), can be found in  \cite[Section 6]{Ba-Ge14b} and  non-commutative monoids, which satisfy Property (b), are given in \cite[Theorem 4.5]{Ge-Sc18b}.

Krull monoids allow a variety of realization theorems. Let $H$ be a  monoid and $H^{\times}$ its group of invertible elements. Then $H$ is Krull if and only if the associated reduced monoid $H/H^{\times}$ is Krull. We recall some realization theorems for  Krull monoids:
\begin{itemize}
\item[(i)] Every reduced Krull monoid is isomorphic to a monoid of zero-sum sequences over a subset of an abelian group (\cite[Theorem 2.5.8]{Ge-HK06a}).

\item[(ii)] Every reduced Krull monoid is isomorphic to a monoid of isomorphism classes of projective modules (\cite[Theorem 2.1]{Fa-Wi04}, \cite{Fa19a}).

\item[(iii)] If the torsion subgroup of $H^{\times}$ of a Krull monoid $H$ is isomorphic to a subgroup of $\Q/\Z$, then $H$ is isomorphic to an arithmetically closed submonoid of a Dedekind domain, which is a quadratic extension of a principal ideal domain (\cite[Theorem 4]{Ge-HK92a}).
\end{itemize}
Combining Theorem \ref{1.1} with the above mentioned realization results, we see that systems $\mathcal L^*$ can be realized as a system of sets of lengths  of Krull monoids, as given in (i) - (iii). Lemma \ref{3.2} shows how to obtain realization results for transfer Krull monoids.

In order to obtain a realization result for Krull domains, we need a further ingredient. It is well-known that
a domain  is a Krull domain if and only if its monoid of nonzero elements is a Krull monoid. However, not every Krull monoid stems from a domain (Krull domains satisfy the approximation property but Krull monoids do not do so in general). Nevertheless, by combining Theorem \ref{1.1} with  Claborn's Realization Theorem (\cite{Cl66}) for class groups of Dedekind domains, we infer that a system $\mathcal L^*$, as given in Theorem \ref{1.1}, even occurs as the system of sets of lengths of a Dedekind domain. We formulate this as a corollary.

\begin{corollary} \label{1.2}
Let $\mathcal L^*$ be a family of finite subsets of $\N_0$ satisfying the properties (a) and (b), of the statement of Theorem \ref{1.1}.
Then there is a Dedekind domain $D$ such that $\mathcal L (D) = \mathcal L^*$. Moreover, there is a Dedekind domain $D^*$ with $\mathcal L (D^*) = \mathcal L^*$ such that the number of classes of the divisor class group $\mathcal C (D^*)$ containing height-one prime ideals is finite if and only if $\mathcal L^*$ has only finitely many indecomposable sets.
\end{corollary}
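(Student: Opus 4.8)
The plan is to derive Corollary \ref{1.2} from Theorem \ref{1.1} together with a realization theorem for Dedekind domains, exploiting that the system of sets of lengths of a Krull monoid is governed only by its class group and by the set of classes containing prime divisors. First I would apply Theorem \ref{1.1} to obtain a Krull monoid $H$ with $\mathcal L(H) = \mathcal L^*$; since passing to the associated reduced monoid changes neither the property of being Krull nor the system of sets of lengths, I may assume $H$ is reduced. Let $G$ be the class group of $H$ and let $G_0 \subseteq G$ be the set of classes containing prime divisors. Two standard facts about Krull monoids enter here: $G_0$ generates $G$, and the block homomorphism from $H$ onto the monoid $\mathcal B(G_0)$ of zero-sum sequences over $G_0$ is a transfer homomorphism, so that $\mathcal L(H) = \mathcal L(\mathcal B(G_0))$. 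Thus the only data I need to transport to a domain is the pair $(G, G_0)$: it suffices to produce a Dedekind domain $D$ together with an isomorphism $\mathcal C(D) \cong G$ under which the classes of $\mathcal C(D)$ containing nonzero prime ideals correspond exactly to $G_0$, because then the monoid of nonzero elements of $D$ is a Krull monoid with these same invariants, whence $\mathcal L(D) = \mathcal L(\mathcal B(G_0)) = \mathcal L(H) = \mathcal L^*$.

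Second, I would invoke the relevant strengthening of Claborn's Realization Theorem \cite{Cl66}: for every abelian group $G$ and every subset $G_0 \subseteq G$ generating $G$ there exists a Dedekind domain $D$ with $\mathcal C(D) \cong G$ whose set of classes containing nonzero prime ideals is precisely the image of $G_0$. The case $G_0 = G$, with every class containing infinitely many prime ideals, is Claborn's theorem itself; the version allowing a prescribed generating set $G_0$ is a refinement of that construction (see \cite[Section 3.7]{Ge-HK06a}). Feeding in the pair $(G, G_0)$ attached to $H$ gives the Dedekind domain of the first assertion.

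For the finiteness equivalence I would couple this with the second part of Theorem \ref{1.1}. If $\mathcal L^*$ has only finitely many indecomposable sets, then Theorem \ref{1.1} allows me to take $H$ finitely generated; a finitely generated reduced Krull monoid has a divisor theory into a free abelian monoid of finite rank, so its class group $G$ is finitely generated and the set $G_0$ of classes with prime divisors is finite, and the Dedekind domain constructed above then has a finitely generated ideal class group with only $|G_0| < \infty$ classes containing nonzero prime ideals. Conversely, suppose a Dedekind domain $D^*$ with $\mathcal L(D^*) = \mathcal L^*$ has only finitely many classes containing nonzero prime ideals, and let $G_0$ denote that finite set of classes. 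Applying the block transfer to the monoid of nonzero elements of $D^*$ gives $\mathcal L^* = \mathcal L(D^*) = \mathcal L(\mathcal B(G_0))$, and $\mathcal B(G_0)$ is finitely generated: its irreducibles are exactly the minimal elements of $\mathcal B(G_0) \setminus \{0\}$ with respect to the divisibility order of the free abelian monoid $\mathcal F(G_0)$, and since $G_0$ is finite there are only finitely many of those by Dickson's Lemma. Hence $\mathcal L^*$ is realized by a finitely generated monoid, and Theorem \ref{1.1} yields that $\mathcal L^*$ has only finitely many indecomposable sets.

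I expect the one genuine obstacle beyond Theorem \ref{1.1} to be the refined Claborn realization used in the second step. Ordinary Claborn realizes an arbitrary abelian group as a class group but forces every class to contain prime ideals, which in general enlarges the system of sets of lengths past $\mathcal L(\mathcal B(G_0))$; what is really needed is the variant in which the set of classes carrying prime ideals can be prescribed, subject only to its generating $G$ --- a condition that holds automatically here. The remaining ingredients --- the equality $\mathcal L(H) = \mathcal L(\mathcal B(G_0))$ via the block transfer, the equivalence that a domain is Krull exactly when its monoid of nonzero elements is Krull, and the harmless passage to reduced monoids --- are routine in the structure theory of Krull monoids.
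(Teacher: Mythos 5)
Your proposal is correct and follows essentially the same route as the paper: apply Theorem \ref{1.1}, pass to the block monoid $\mathcal B(G_P)$ via the transfer homomorphism \eqref{Krull-transfer}, invoke the refined form of Claborn's theorem (\cite[Theorem 3.7.8]{Ge-HK06a}) prescribing the set of classes containing prime divisors, transfer back to $D^{\bullet}$, and settle the finiteness equivalence via Lemma \ref{2.1} and the finite generation of $\mathcal B(G_0)$ together with Proposition \ref{3.3} (which you access through the ``moreover'' part of Theorem \ref{1.1}). You also correctly identify the key point that plain Claborn is insufficient and the variant with prescribed classes carrying prime ideals is what is needed.
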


We proceed as follows. In Section \ref{2}, we gather the required background on sets of lengths and on Krull monoids. The proofs of Theorem \ref{1.1} and of Corollary \ref{1.2} will be given in Section \ref{3}.

\smallskip
\section{Background on sets of lengths and on Krull monoids} \label{2}

For elements $a, b \in \Z$, we denote by $[a, b ] = \{ x \in \Z \colon a \le x \le b \}$ the discrete interval between $a$ and $b$. Let $L, L' \subset \Z$ be subsets. Then $L+L' = \{a + a' \colon a \in L, a' \in L' \}$ is their sumset. For $n \in \N_0$,  $nL = L + \ldots + L$ is the $n$-fold sumset and $n \cdot L = \{ n a \colon a \in L\}$ is the dilation of $L$ by $n$ (with the convention that  $nL = \{0\}$ if $n=0$).  We denote by $\Delta (L) \subset \N$ the set of (successive) distances of $L$. Thus $\Delta (L) = \{d\}$ if $L$ is an arithmetical progression with difference $d$.

Let $\mathcal L \subset P_{\fin} (\N_0)$ be a family of finite subsets of $\N_0$ with $\{0\} \in \mathcal L$. We say that $\mathcal L$ is {\it additively closed} if the sumset $L_1+L_2 \in \mathcal L$ for all $L_1, L_2 \in \mathcal L$. A set $L \in \mathcal L$ is {\it indecomposable} (in $\mathcal L$) if $L_1, L_2 \in \mathcal L$ and $L = L_1+L_2$ implies that $L_1 = \{0\}$ or $L_2 = \{0\}$. Thus $\mathcal L$ is additively closed if and only if $\mathcal L$ is a semigroup with set addition as operation and with identity element $\{0\}$.

We briefly gather some arithmetical concepts of semigroups. The arithmetic of (in general non-cancellative subsemigroups)  $\mathcal L \subset P_{\fin} (\N_0)$ is studied in \cite{Fa-Tr18a}. Since the present focus is on realization results by Krull monoids (which are cancellative), we restrict to cancellative semigroups. Our notation and terminology is consistent with \cite{Ge-HK06a}.

\smallskip
\noindent
{\bf Monoids.} By a {\it monoid}, we mean a commutative and cancellative semigroup with identity element. For a set $P$, we denote by $\mathcal F (P)$ the free abelian monoid with basis $P$. An element $a \in \mathcal F (P)$ will be written in the form
\[
a = \prod_{p \in P} p^{\mathsf v_p (a)} , \quad \text{where $\mathsf v_p (a) = 0$ \ for almost all $p \in P$,}
\]
and $|a| = \sum_{p \in P} \mathsf v_p (a) \in \N_0$ is the length of $a$.
Let $H$ be a monoid. We denote by $H^{\times}$ the group of invertible elements, by $\mathsf q (H)$ the quotient group of $H$, by $H_{\red} = H/H^{\times}$ the associated reduced monoid, by $\mathcal A (H)$ the set of atoms (irreducible elements) of $H$, by $\mathfrak X (H)$ the set of minimal prime $s$-ideals of $H$, and by
\[
\widehat H = \{ x \in \mathsf q (H) \colon \ \text{there is $c \in H$ such that $cx^n \in H$ for all $n \in \N$} \}
\]
the complete integral closure of $H$. We say that $H$ is completely integrally closed  if $H = \widehat H$. The monoid $H$ is a Krull monoid if it satisfies one of the following equivalent conditions (\cite[Theorem 2.4.8]{Ge-HK06a}, \cite[Chapter 22]{HK98}):
\begin{itemize}
\item $H$ is completely integrally closed and satisfies the ascending chain condition on divisorial ideals.

\item There is a free abelian monoid $F = \mathcal F (P)$ and a divisor theory $\partial \colon H \to F$.
\end{itemize}
Let $H$ be a Krull monoid and suppose that $H_{\red} \hookrightarrow F = \mathcal F (P)$ a divisor theory. Then $P$ is called the set of prime divisors of $F$ and
\[
\mathcal C (H) = \mathsf q (F)/\mathsf q (H_{\red} )
\]
is the (divisor) class group of $H$. It is isomorphic to the $v$-class group $\mathcal C_v (H)$, which is the group of fractional divisorial ideals modulo the set of fractional principal ideals. Then  $G_P = \{ [p] = p\mathsf q (H_{\red} \colon p \in P\}$ is the set of classes containing prime divisors (see \cite[Definition 2.4.9]{Ge-HK06a}). We need the following lemma (\cite[Theorem 2.7.14]{Ge-HK06a}).

\begin{lemma} \label{2.1}
For a reduced Krull monoid $H$ with divisor theory $H \hookrightarrow \mathcal F (P)$ the following statements are equivalent.
\begin{enumerate}
\item[(a)] $H$ is finitely generated.

\item[(b)] The set of prime divisors $P$ is finite.

\item[(c)] $\mathfrak X (H)$ is finite.
\end{enumerate}
\end{lemma}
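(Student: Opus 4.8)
The plan is to prove the two implications (a)\,$\Rightarrow$\,(b) and (b)\,$\Rightarrow$\,(a) directly, and then to derive (b)\,$\Leftrightarrow$\,(c) from the canonical bijection between prime divisors and minimal prime $s$-ideals. Throughout I identify $H$ with its image $\partial(H) \subseteq F = \mathcal F(P)$; recall that a divisor theory is in particular a divisor homomorphism, so that $a \t b$ in $H$ holds if and only if $a \t b$ in $F$, which is equivalent to $H$ being saturated in $F$, i.e.\ $H = \mathsf q(H) \cap F$ inside $\mathsf q(F)$.

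To prove (a)\,$\Rightarrow$\,(b), I would use that a reduced finitely generated monoid is generated by its finite set of atoms $\mathcal A(H) = \{u_1, \dots, u_n\}$. Setting $P_0 = \bigcup_{i=1}^n \supp(u_i)$, a finite subset of $P$, every element of $H$ is a product of atoms and hence has support contained in $P_0$. By the defining gcd-property of a divisor theory, each $p \in P$ is the greatest common divisor in $F$ of finitely many elements of $H$; since these elements have support in $P_0$, so does their greatest common divisor, which forces $p \in P_0$. Thus $P = P_0$ is finite.

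To prove (b)\,$\Rightarrow$\,(a), suppose $P = \{p_1, \dots, p_n\}$, so that $F \cong \N_0^n$ and $\mathsf q(F) \cong \Z^n$. Saturation gives $H = \mathsf q(H) \cap \N_0^n$; since the elements of $\mathsf q(H) \subseteq \Z^n$ are integer vectors, this equals $\mathsf q(H) \cap \R_{\ge 0}^n$, that is, $H$ is the set of lattice points of the subgroup $\mathsf q(H) \subseteq \Z^n$ lying in the rational polyhedral cone $\R_{\ge 0}^n$. Working inside the rational subspace $V = \mathsf q(H) \otimes \R$, in which $\mathsf q(H)$ is a full lattice and $\R_{\ge 0}^n \cap V$ is a rational polyhedral cone, Gordan's Lemma yields that $H$ is finitely generated.

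Finally, (b)\,$\Leftrightarrow$\,(c) follows from the standard structure theory of Krull monoids: the map $p \mapsto \{a \in H \colon \mathsf v_p(a) > 0\}$ is a bijection from $P$ onto the set $\mathfrak X(H)$ of minimal prime $s$-ideals, so $|P| = |\mathfrak X(H)|$ and the two finiteness conditions coincide. The step I expect to be the main obstacle is (b)\,$\Rightarrow$\,(a): its whole force comes from recognizing $H$ as the saturated submonoid $\mathsf q(H) \cap \N_0^n$ and then invoking Gordan's Lemma. It is essential here that one uses saturation, not merely the inclusion $H \subseteq F$, since arbitrary submonoids of $\N_0^n$ (for $n \ge 2$) need not be finitely generated.
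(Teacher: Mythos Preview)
Your proof is correct. Note, however, that the paper does not actually prove Lemma~\ref{2.1}; it simply cites \cite[Theorem 2.7.14]{Ge-HK06a} and uses the result as a black box. Your three steps---(a)\,$\Rightarrow$\,(b) via support considerations and the gcd-condition defining a divisor theory, (b)\,$\Rightarrow$\,(a) via saturation $H = \mathsf q(H) \cap \N_0^n$ and Gordan's Lemma, and (b)\,$\Leftrightarrow$\,(c) via the canonical bijection $p \mapsto \{a \in H \colon \mathsf v_p(a) > 0\}$ between $P$ and $\mathfrak X(H)$---are exactly the standard argument one finds in that reference, so there is no divergence to discuss.
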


A (commutative integral) domain $D$ is a Krull domain if and only if its multiplicative monoid $D^{\bullet} : = D \setminus \{0\}$ is a Krull monoid (this was first proved independently by Krause \cite{Kr89} and Wauters \cite{Wa84a}, before it was generalized to non-commutative rings and to commutative rings with zero-divisors). If this holds, then the class group $\mathcal C (D)$ of the domain and the class group of the monoid $\mathcal C (D^{\bullet})$ coincide.

Let $G$ be an additive abelian group and $G_0 \subset G$ a subset. If $S= g_1 \cdot \ldots \cdot g_{\ell} \in \mathcal F (G_0)$, then $\sigma (S) = g_1 + \ldots + g_{\ell}$ is the sum of $S$. The set
\[
\mathcal B (G_0) = \{ S \in \mathcal F (G_0) \colon \sigma (S)=0 \} \subset \mathcal F (G_0)
\]
is a submonoid of $\mathcal F (G_0)$, called the {\it monoid of zero-sum sequences} over $G_0$, and it is a Krull monoid.

\smallskip
\noindent
{\bf Arithmetic of Monoids.} Let $H$ be a monoid. Then the free abelian monoid $\mathsf Z (H) = \mathcal F ( \mathcal A (H_{\red} ))$ is the factorization monoid of $H$ and the canonical epimorphism $\pi \colon \mathsf Z (H) \to H_{\red}$ is the factorization homomorphism.  If $\pi$ is surjective, then $H$ is called {\it atomic}. For an element $a \in H$,
\begin{itemize}
\item $\mathsf Z (a) = \pi^{-1} (aH^{\times}) \subset \mathsf Z (H)$ is the {\it set of factorizations} of $a$, and

\item $\mathsf L_H (a) = \mathsf L (a) = \{ |z| \colon z \in \mathsf Z (a) \} \subset \N_0$ is the {\it set of lengths} of $a$.
\end{itemize}
We say that $a$ has unique factorization if $|\mathsf Z (a)|=1$ and that $H$ is factorial if $|\mathsf Z (a)|=1$ for all $a \in H$. Then
\[
\begin{aligned}
\mathcal L (H) & = \{ \mathsf L (a) \colon a \in H \} \quad \text{is the {\it system of sets of lengths} of $H$, and} \\
\Delta (H) & = \bigcup_{L \in \mathcal L (H)} \Delta (L) \ \subset \N \quad \text{is the {\it set of distances} of $H$} \,.
\end{aligned}
\]
If $\Delta (H) \ne \emptyset$, then $\min \Delta (H) = \gcd \Delta (H)$.
If $H$ is a Krull monoid with divisor theory $\partial \colon H \to \mathcal F (P)$ and $a \in H$ with $\partial (a) = p_1 \cdot \ldots \cdot p_{\ell}$, where $p_1, \ldots, p_{\ell} \in P$, then $\sup \mathsf L_H (a) \le \ell$. Thus, all sets of lengths of $H$ are finite.
Let $z, z' \in \mathsf Z (H)$ be two factorizations. Then we can write them in the form
\[
z = u_1 \cdot \ldots \cdot u_{\ell}v_1 \cdot \ldots \cdot v_m \quad \text{and} \quad z' = u_1 \cdot \ldots \cdot u_{\ell} w_1 \cdot \ldots \cdot w_n \,,
\]
where all $u_i, v_j, w_k \in \mathcal A (H_{\red})$ and the $v_j$ and $w_k$ are pairwise distinct, and we call $\mathsf d (z,z') = \max \{m, n \} \in \N_0$  the {\it distance} of between $z$ and $z'$. The {\it catenary degree} $\mathsf c (a) \in \N_0 \cup \{\infty\}$ of an element $a \in H$ is the smallest $N \in \N_0 \cup \{\infty\}$ such that for each two factorizations $z, z' \in \mathsf Z (a)$ there are $z=z_0, z_1, \ldots, z_s = z'$ in $\mathsf Z (a)$ such that $\mathsf d (z_{i-1}, z_i) \le N$ for all $i \in [1, s]$. Then
\[
\mathsf c (H) = \sup \{\mathsf c (a) \colon a \in H \} \in \N_0 \cup \{\infty\}
\]
denotes the {\it catenary degree} of $H$. Note that $H$ is factorial if and only if $\mathsf c (H)=0$. If $\Delta (H) \ne \emptyset$, then
\[
2 + \sup \Delta (H) \le \mathsf c (H) \,,
\]
but there are Dedekind domains $D$ with $\Delta (D)= \emptyset$ and $\mathsf c (D) = \infty$.

\smallskip
\section{Proof of Theorem \ref{1.1} and of Corollary \ref{1.2}} \label{3}
\smallskip

\begin{proposition} \label{3.1}
Let $r \in \N$ and $L = \{k_1, \ldots, k_r\}  \subset \N_{\ge 2}$. Then there exists a reduced  finitely generated Krull monoid $H$ with $\mathcal A (H) = \{ u_{i,j} \colon j \in [1, k_i], i \in [1,r] \}$ and which has the following properties.
\begin{itemize}
\item[(a)] $u_{1,1} \cdot \ldots \cdot u_{1, k_1}  = \ldots  = u_{r,1} \cdot \ldots \cdot u_{r, k_r}$.

\item[(b)] Every $b \in H \setminus u_{1,1} \cdot \ldots \cdot u_{1, k_1}H$ has unique factorization.

\item[(c)] For every $a \in H$, there is a unique $n \in \N_0$ and a unique $b \in H \setminus u_{1,1} \cdot \ldots \cdot u_{1,k_1}H$ such that $a = (u_{1,1} \cdot \ldots \cdot u_{1,k_1})^n b$ and $\mathsf L (a) = n L + \mathsf L (b)$.
\end{itemize}
Moreover,  the catenary degree $\mathsf c (H)$ of $H$ is $0$ for $r=1$, is $k_r$ for $r > 1$, and
\[
\mathcal L (H) = \big\{ \{0\}, \{1\} \big\} \cup \{ y+ n L \colon  \ y, n  \in \N_0 \} \,.
\]
\end{proposition}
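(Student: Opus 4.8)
The natural candidate for $H$ is a monoid of zero-sum sequences, since by Lemma~2.1 and the background recalled in Section~\ref{2} these are exactly the reduced finitely generated Krull monoids, and their arithmetic is tractable. Concretely, I would look for a finite subset $G_0$ of an abelian group $G$ so that $H = \mathcal B(G_0)$ has exactly the prescribed atoms and the single nontrivial relation in~(a). The relation $u_{1,1}\cdots u_{1,k_1} = \cdots = u_{r,1}\cdots u_{r,k_r}$ says that all $r$ products represent the same element, i.e. there is one ``defining'' element $W$ of $H$ with $\mathsf L(W) = L$ (at least; part of the work is to show equality, not just containment), and every other atomic product is rigid. A clean way to engineer this is to take $G = \Z^{(I)}$ for a suitable index set, put the $k_1 + \dots + k_r$ atoms in bijection with prescribed vectors, and impose that the only additive relation among these vectors (over $\N_0$) is the one coming from~(a). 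For instance one can take coordinates indexed so that each family $\{u_{i,1},\dots,u_{i,k_i}\}$ shares a ``block'' coordinate forcing the elements of that block to appear in equal numbers (hence in multiples of $k_i$, grouped into copies of the $i$-th product), plus enough further coordinates to separate the blocks and to force the grouping. Then every $a \in H$ factors uniquely as $W^n b$ with $b$ not divisible by $W$, giving (b) and (c), and the length formula $\mathsf L(a) = nL + \mathsf L(b)$ follows because the factorizations of $a$ are obtained by choosing how many of the $n$ ``slots'' are realized through each of the $r$ products and the rest of $b$ contributes rigidly.

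The key steps, in order, are: (1)~write down the explicit group $G$ and the vectors defining the atoms $u_{i,j}$, and verify that $\mathcal A(\mathcal B(G_0))$ is precisely $\{u_{i,j}\}$ — i.e. no unintended atoms appear and no $u_{i,j}$ is further decomposable; (2)~determine all $\N_0$-relations among the $u_{i,j}$ and show the relation lattice is generated by~(a), which is the crux; (3)~deduce the unique-factorization statement~(b) for elements outside $W H$ and the normal form in~(c), together with $\mathsf L(a) = nL + \mathsf L(b)$; (4)~read off $\mathcal L(H) = \{\{0\},\{1\}\} \cup \{y + nL : y,n \in \N_0\}$ from~(c), using that sets of lengths of elements $b$ outside $WH$ are singletons $\{y\}$ and that $W^n b$ contributes $nL + \{y\}$; (5)~compute the catenary degree: for $r=1$ the monoid is factorial so $\mathsf c(H) = 0$; for $r>1$, the bound $\mathsf c(H) \le k_r$ comes from the fact that any two factorizations of $W^n b$ differ only in how the $W$-blocks are expanded, each re-expansion step replacing one length-$k_i$ product by another of length $\le k_r$ so having distance $\le k_r$; and the reverse inequality $\mathsf c(H) \ge k_r$ follows by comparing the factorizations $u_{1,1}\cdots u_{1,k_1}$ and $u_{r,1}\cdots u_{r,k_r}$ of $W$, which have distance exactly $k_r$ and cannot be connected through intermediate factorizations (there are none, since $W$ has exactly $r$ factorizations, pairwise at distance $\max\{k_i,k_j\}$).

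The main obstacle is step~(2): constructing a finite $G_0$ whose monoid of zero-sum sequences has its relation module generated by a single prescribed relation of the shape~(a), with no parasitic atoms or relations, and doing so uniformly for arbitrary $L = \{k_1,\dots,k_r\}$. This is a question about controlling the combinatorics of $\mathcal B(G_0)$ via the choice of $G_0$; the honest work is in choosing enough ``separating'' coordinates that each block is forced to group into copies of its product and that distinct blocks cannot mix, while keeping $G_0$ finite. Once the relation structure is pinned down, steps (3)–(5) are bookkeeping: (b) and (c) are immediate from unique factorization modulo the ideal generated by $W$; the description of $\mathcal L(H)$ is a direct translation of~(c); and both catenary degree claims reduce to the single element $W$, whose factorization graph is completely explicit.
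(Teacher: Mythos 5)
Your proposal does not contain a proof of the crux, and you say so yourself: step (2) -- producing a finite subset $G_0$ of an abelian group such that $\mathcal B (G_0)$ has precisely the atoms $u_{i,j}$ and a relation module generated by the single relation (a) -- is exactly the content of the proposition, and it is left as ``the honest work''. The hinted construction (block coordinates forcing the members of the $i$-th family to occur ``in equal numbers'') does not obviously do what you need: forcing equal multiplicities within a block neither forces the grouping of a block into copies of its product nor excludes parasitic atoms, and naive choices visibly fail (e.g.\ for $L=\{2,3\}$, writing the $u_{2,j}$ as pairs $g\,(-g)$ in $\Z^n$ and the $u_{1,j}$ as a re-partition of the same multiset immediately creates recombination atoms and extra relations). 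Note also that you cannot shortcut this by appealing to the realization theorem quoted in the introduction (every reduced Krull monoid is isomorphic to some $\mathcal B (G_0)$): that would presuppose the existence of a Krull monoid with properties (a)--(c), which is what is to be proved. Likewise, your opening claim that monoids of zero-sum sequences over finite sets ``are exactly'' the reduced finitely generated Krull monoids is not what Lemma \ref{2.1} says and would itself need proof. Granting step (2), your steps (3)--(5) are essentially right (the catenary degree argument needs $k_r=\max L$ and an argument, as in the paper's {\bf A3}, that factorizations of $(u_{1,1}\cdots u_{1,k_1})^n b$ cannot mix atoms across the two parts), but as it stands the proposal is a plan whose central step is missing.

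For comparison, the paper avoids the zero-sum engineering altogether: it builds $H=H_r$ by induction on $r$, defining $H_r\subset \mathsf q (H_{r-1})\times \mathsf q (F)$ (with $F$ free on $u_{r,1},\ldots ,u_{r,k_r-1}$) as the monoid generated by $H_{r-1}$, the new atoms, and $u_{r,k_r}:=u_{1,1}\cdots u_{1,k_1}\,(u_{r,1}\cdots u_{r,k_r-1})^{-1}$. The Krull property is then obtained not by exhibiting a divisor theory or a zero-sum model, but by verifying that $H_r$ is root-closed and invoking the fact that finitely generated root-closed monoids are Krull; properties (b) and (c) are proved directly by comparing factorizations inside $\mathsf q (H_{r-1})\times \mathsf q (F)$, using $\mathsf q (H_{r-1})\cap \mathsf q (F)=\{1\}$. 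If you want to salvage your route, you would have to either carry out the explicit construction of $G_0$ with full verification of the atom set and of the relation structure, or first build the monoid abstractly (as the paper does) and only afterwards transport it to a zero-sum model.
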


\begin{proof}
The statement on $\mathcal L (H)$ follows immediately from Property (c). In order to show the existence of a Krull monoid with the given properties we proceed by induction on $r$. If $r=1$, then the free abelian monoid $H_1$ with basis $\{u_{1,1}, \ldots, u_{1, k_1} \}$ has all required properties. In particular, $\mathsf c (H)=0$ and
\[
\mathcal L (H) = \big\{ \{y\} \colon y \in \N_0 \big\} \,.
\]
Let $r >1$ and suppose that there is a monoid $H_{r-1}$ with all the wanted properties. Let $F$ be the free abelian monoid with basis $\{ u_{r,1} , \ldots ,u_{r, k_r-1} \}$. Let
\[
H_r \subset \mathsf q (H_{r-1}) \times \mathsf q (F) \,,
\]
be defined as the submonoid generated by
\[
H_{r-1},  u_{r,1} , \ldots ,u_{r, k_r -1}, \ \quad \text{and by } \quad u_{r, k_r} := u_{1,1} \cdot \ldots \cdot u_{1, k_1} \big( u_{r,1} \cdot \ldots  \cdot u_{r, k_r -1 } \big)^{-1} \,.
\]
Then, by construction, $H_r$ is a reduced monoid, which is  generated by $A = \{ u_{i,j} \colon j \in [1, k_i], i \in [1,r] \}$. Obviously,  $A$ is a minimal generating set, whence $A$ is the set of atoms of $H_r$ by \cite[Proposition 1.1.7]{Ge-HK06a}. We continue with three assertions.

\begin{enumerate}
\item[{\bf A1.}\,] $H_r$ is root-closed (i.e., if $x \in \mathsf q (H_r)$ and $m \in \N$ with $x^m \in H_r$, then $x \in H_r$).

\smallskip

\item[{\bf A2.}\,] Every $b \in H_r \setminus u_{1,1} \cdot \ldots \cdot u_{1, k_1}H_r$ has unique factorization.

\smallskip

\item[{\bf A3.}\,] For every $a \in H_r$, there are unique $n \in \N$ and  unique $b \in H_r \setminus u_{1,1} \cdot \ldots \cdot u_{1, k_1}H_r$ such that $\mathsf Z (a) = \mathsf Z \big(  (u_{1,1} \cdot \ldots \cdot u_{1,k_1})^n \big) \mathsf Z (b)$ and
    \[
    \begin{aligned}
    \mathsf Z \big(  (u_{1,1} \cdot \ldots \cdot u_{1,k_1})^n \big) = \big\{ (u_{1,1} \cdot \ldots \cdot u_{1,k_1})^{n_1} & \cdot \ldots \cdot (u_{r,1} \cdot \ldots \cdot u_{r,k_r})^{n_r} \colon \\ &  (n_1, \ldots, n_r) \in \N_0^r \ \text{with} \ n_1+ \ldots + n_r = n \big\} \,.
    \end{aligned}
    \]
\end{enumerate}

\smallskip
We suppose that {\bf A1}, {\bf A2}, and {\bf A3} hold and complete the proof of the proposition. Since finitely generated root-closed monoids are Krull by \cite[Theorem 2.7.14]{Ge-HK06a}, $H_r$ is a Krull monoid by {\bf A1}. Clearly,  Property (a) holds and {\bf A2} is equal to  Property (b). Furthermore, Assertion {\bf A3} implies Property (c) and that, for every $a \in H_r$,
\[
\mathsf c (a) = \begin{cases}
                 1 & \text{if $n=0$}, \\
                 k_r & \text{if $n > 0$}.
                 \end{cases}
\]
Thus $\mathsf c (H_r)=k_r$. Finally, Property (c) implies that $\mathcal L (H_r)$ has the given form.

\bigskip
\noindent
{\it Proof of \,{\bf A1}}.\, Let $x\in \mathsf q(H_r)$ be such that  $x^m\in H_r$ for some $m \in \N$. We have to show that $x \in H_r$. Since $\mathsf q(H_r)\subset \mathsf q(H_{r-1})\times \mathsf q(F)$,
there are $y\in \mathsf q(H_{r-1})$, $s_1, \ldots, s_{k_r-1}\in \Z$, $u\in H_{r-1}$, and $t_1,\ldots, t_{k_r}\in \N_0$ such that
\[
x=yu_{r,1}^{s_1}\ldots u_{r, k_r-1}^{s_{k_r-1}} \ \text{ and }\ x^m=uu_{r,1}^{t_1}\ldots u_{r, k_r}^{t_{k_r}}\,.
\]
Since $u_{r,1}\ldots u_{r,k_r}\in H_{r-1}$, we may assume that either $t_{k_r}=0$ or there exists $i\in [1, k_r-1]$ such that $t_i=0$.

If $t_{k_r}=0$, then $x^m=y^mu_{r,1}^{ms_1}\ldots u_{r, k_r-1}^{ms_{k_r-1}}=uu_{r,1}^{t_1}\ldots u_{r,k_r-1}^{t_{k_r-1}}\in \mathsf q(H_{r-1})\times \mathsf q(F)$ implies that $y^m=u\in H_{r-1}$ and $ms_i=t_i$ for every $i\in [1, k_r-1]$. Therefore $s_i\ge 0$ for all $i\in [1, k_r-1]$. Since $H_{r-1}$ is a reduced finitely generated Krull monoid, it follows by \cite[Theorem 2.7.14]{Ge-HK06a} that $H_{r-1}$ is root-closed, whence $y\in H_{r-1}$. Therefore $x=yu_{r,1}^{s_1}\ldots u_{r,k_r-1}^{s_{k_r-1}}\in H_{r}$.

Suppose there is  $i\in [1, k_r-1]$ such that $t_i=0$, say $i=1$. Then
\begin{align*}
x^m&=y^mu_{r,1}^{ms_1}\ldots u_{r, k_r-1}^{ms_{k_r-1}}=uu_{r,2}^{t_2}\ldots u_{r,k_r}^{t_{k_r}}\\
&=uu_{r,2}^{t_2}\ldots u_{r,k_r-1}^{t_{k_r-1}}(u_{1,1}\ldots u_{1,k_1})^{t_{k_r}}(u_{r,1}\ldots u_{r,k_r-1})^{-t_{k_r}}\\
&=u(u_{1,1}\ldots u_{1,k_1})^{t_{k_r}}\ u_{r,1}^{-t_{k_r}}u_{r,2}^{t_2-t_{k_r}}\ldots u_{r,k_r-1}^{t_{k_r-1}-t_{k_r}}\in \mathsf q(H_{r-1})\times \mathsf q(F)\,,
\end{align*}
which implies that $y^m=u(u_{1,1}\ldots u_{1,k_1})^{t_{k_r}}\in H_{r-1}$ and $ms_i=t_i-t_{k_r}$ for all $i\in [1, k_r-1]$. We set $t_{k_r}=qm+m_0$, where $q\in \N_0$ and $m_0\in [0, m-1]$ and obtain that
$$(y(u_{1,1}\ldots u_{1,k_1})^{-q})^m=y^m(u_{1,1}\ldots u_{1,k_1})^{-t_{k_r}+m_0}=u(u_{1,1}\ldots u_{1,k_1})^{m_0}\in H_{r-1}\,.$$
Since $H_{r-1}$ is a reduced finitely generated Krull monoid, it follows by \cite[Theorem 2.7.14]{Ge-HK06a} that $H_{r-1}$ is root-closed, whence $y(u_{1,1}\ldots u_{1,k_1})^{-q}\in H_{r-1}$ and
\begin{align*}
x&=yu_{r,1}^{s_1}\ldots u_{r,k_r-1}^{s_{k_r-1}}=yu_{r,1}^{\lfloor \frac{t_1}{m}\rfloor-q}\ldots u_{r, k_r-1}^{\lfloor \frac{t_{k_r-1}}{m} \rfloor-q}\\
&=y(u_{1,1}\ldots u_{1,k_1})^{-q}u_{r,k_r}^qu_{r,1}^{\lfloor \frac{t_1}{m}\rfloor}\ldots u_{r, k_r-1}^{\lfloor \frac{t_{k_r-1}}{m} \rfloor}\in H_r\,.
\end{align*}

\smallskip
\noindent
{\it Proof of \,{\bf A2}}.\,  Let $b\in H_r\setminus u_{1,1} \cdot \ldots \cdot u_{1,k_1}H_r$. Factorizations $z_1, z_2 \in \mathsf Z (b)$ can be written in the form
\[
z_1=x\cdot y\cdot u_{r,k_r}^{t} \quad \text{and} \quad z_2=x'\cdot y'\cdot u_{r,k_r}^{t'}\,, \text{ where }x,x'\in \mathsf Z(H_{r-1}), y,y'\in \mathsf Z(F), \text{ and } t,t'\in \N_0\,.
\]
We have to show that $z_1 = z_2$.
By symmetry, we may assume that $t\ge t'$. Since $u_{r,1}\ldots u_{r,k_r}=u_{1,1}\ldots u_{1,k_1}$, we infer that
\[
\pi(x)\pi(x')^{-1}(u_{1,1} \cdot \ldots \cdot u_{1,k_1})^{t-t'}=\pi(y')\pi(y)^{-1}(u_{r,1} \cdot \ldots \cdot u_{r,k_r-1})^{t-t'}\in \mathsf q(H_{r-1})\cap \mathsf q(F)=\{1\}\,,
\]
whence $\pi(x)(u_{1,1} \cdot \ldots \cdot u_{1,k_1})^{t-t'}=\pi(x')$. Since $b=\pi(x')\pi(y')u_{r,k_r}^{t'}$ and $u_{1,1} \cdot \ldots \cdot u_{1,k_1}$ does not divide $b$, it follows that $t=t'$. Thus, we obtain that
$\pi(x)=\pi(x')$ and $\pi(y)=\pi(y')$, whence $y=y'$. Since $x,x'\in \mathsf Z(\pi(x))\subset \mathsf Z(H_{r-1})$, the induction hypothesis implies  $x=x'$,
whence $z_1=z_2$,

\smallskip
\noindent
{\it Proof of \,{\bf A3}}.\, Let $a\in H_r$ and let $n\in \N_0$ be the maximal integer such that $(u_{1,1}\ldots u_{1,k_1})^n$ divides $a$ and set $b=a(u_{1,1}\ldots u_{1,k_1})^{-n}$. Then
$a=(u_{1,1}\ldots u_{1,k_1})^n b$ and hence
 \[
\big\{ (u_{1,1} \cdot \ldots \cdot u_{1,k_1})^{n_1}  \cdot \ldots \cdot (u_{r,1} \cdot \ldots \cdot u_{r,k_r})^{n_r} \colon   (n_1, \ldots, n_r) \in \N_0^r \ \text{with} \ n_1+ \ldots + n_r = n \big\} \cdot \mathsf Z(b)\subset \mathsf Z(a)\,.
\]
Conversely, let $$z=(u_{1,1} \cdot \ldots \cdot u_{1,k_1})^{t_1}  \cdot \ldots \cdot (u_{r,1} \cdot \ldots \cdot u_{r,k_r})^{t_r}\cdot x\cdot y\cdot u_{r,k_r}^t$$ be a factorization of $a$, where $t_1,\ldots, t_r,t\in \N_0$, $y\in \mathsf Z(F)$, and $x\in \mathsf Z(H_{r-1})$ such that $u_{r,1}\cdot\ldots\cdot u_{r,k_r}$ does not divide $y\cdot u_{r,k_r}^t$ in $\mathsf Z (H_r)$ and $u_{i,1}\cdot \ldots \cdot u_{i,k_i}$ does not divide $x$ in $\mathsf Z(H_{r-1})$ for every $i\in [1, r-1]$. If $t_1+\ldots+t_r>n$, then $(u_{1,1}\ldots u_{1,k_1})^{n+1}$ divides $a$, a contradiction to the maximality of $n$. If $t_1+\ldots+t_r=n$, then $$z\in \big\{ (u_{1,1} \cdot \ldots \cdot u_{1,k_1})^{n_1}  \cdot \ldots \cdot (u_{r,1} \cdot \ldots \cdot u_{r,k_r})^{n_r} \colon   (n_1, \ldots, n_r) \in \N_0^r \ \text{with} \ n_1+ \ldots + n_r = n \big\} \cdot \mathsf Z(b)\,.$$
Assume to the contrary that $t_1+\ldots+t_r<n$. Then $(u_{1,1} \cdot \ldots \cdot u_{1,k_1})$ divides $\pi(x\cdot y\cdot u_{r,k_r}^t)$, say $c=\pi(x\cdot y\cdot u_{r,k_r}^t)$. Thus, $c$ has  factorizations
\[
z_1=x\cdot y\cdot u_{r,k_r}^t \ \text{ and } \ z_2=(u_{1,1}\cdot \ldots \cdot u_{1,k_1})\cdot x'\cdot y'\cdot u_{r,k_r}^{t'}\,,
\]
where $x'\in \mathsf Z(H_{r-1})$, $y'\in \mathsf Z(F)$, and $t'\in \N_0$. It follows that
$$\pi(x)\pi(x')^{-1}(u_{1,1} \cdot \ldots \cdot u_{1,k_1})^{t-t'-1}=\pi(y')\pi(y)^{-1}(u_{r,1} \cdot \ldots \cdot u_{r,k_r-1})^{t-t'}\in \mathsf q(H_{r-1})\cap \mathsf q(F)=\{1\}\,.$$
If $t>t'$, then $t>0$ and $\pi(y)=\pi(y')(u_{r,1} \cdot \ldots \cdot u_{r,k_r-1})^{t-t'}$, whence $u_{r,1}\cdot \ldots \cdot u_{r,k_r}$ divides $y\cdot u_{r,k_r}^t$, a contradiction to our assumption on $y\cdot u_{r,k_r}^t$. Therefore $t\le t'$ and $\pi(x)=\pi(x')(u_{1,1} \cdot \ldots \cdot u_{1,k_1})^{1+t'-t}$. Let $m\in \N$ be the maximal integer such that $(u_{1,1} \cdot \ldots \cdot u_{1,k_1})^{m}$ divides $\pi(x)$ and let $c_0=\pi(x)(u_{1,1} \cdot \ldots \cdot u_{1,k_1})^{-m}$. By the induction hypothesis, $x$ is in
\[
\big\{ (u_{1,1} \cdot \ldots \cdot u_{1,k_1})^{n_1}  \cdot \ldots \cdot (u_{r-1,1} \cdot \ldots \cdot u_{r-1,k_{r-1}})^{n_{r-1}} \colon   (n_1, \ldots, n_{r-1}) \in \N_0^{r-1} \ \text{with} \ n_1+ \ldots + n_{r-1} = m \big\} \cdot \mathsf Z_{H_{r-1}}(c_0)\,,
\]
 a contradiction to our assumption on $x$. Thus, we obtained that
 \[
 \mathsf Z(a)=\big\{ (u_{1,1} \cdot \ldots \cdot u_{1,k_1})^{n_1}  \cdot \ldots \cdot (u_{r,1} \cdot \ldots \cdot u_{r,k_r})^{n_r} \colon   (n_1, \ldots, n_r) \in \N_0^r \ \text{with} \ n_1+ \ldots + n_r = n \big\} \cdot \mathsf Z(b) \,. \qedhere
 \]
\end{proof}

\smallskip
A monoid homomorphism $\theta \colon H \to B$ between atomic monoids is said to be a {\it transfer homomorphism} if the following two properties are satisfied.
\begin{enumerate}
\item[{\bf (T\,1)\,}] $B = \theta(H) B^\times$ \ and \ $\theta
^{-1} (B^\times) = H^\times$.

\item[{\bf (T\,2)\,}] If $u \in H$, \ $b,\,c \in B$ \ and \ $\theta
(u) = bc$, then there exist \ $v,\,w \in H$ \ such that \ $u = vw$, \
$\theta (v) \in b B^{\times}$ \ and \ $\theta (w) \in c B^{\times}$.
\end{enumerate}
One of the main properties of  transfer homomorphisms is that they preserve sets of lengths. That is, if $\theta \colon H \to  B $ is a  transfer homomorphism, then $\mathsf L_H (a) = \mathsf L_{B } ( \theta (a))$ for all $a \in H$, whence
\begin{equation} \label{equality-of-systems}
 \mathcal L (H) = \mathcal L ( B ) \,.
\end{equation}
A monoid is said to be a {\it transfer Krull monoid} (of finite type) if there is a  transfer homomorphism $\theta \colon H \to \mathcal B (G_0)$ for a (finite) subset $G_0$ of an abelian group $G$. If $H$ is a Krull monoid with divisor theory $\partial \colon H \to \mathcal F (P)$ and class group $G=\mathcal C (H)$, then there is a transfer homomorphism
\begin{equation} \label{Krull-transfer}
\theta \colon H \to \mathcal B ( G_P) \,,
\end{equation}
where $G_P = \{ [p] \colon p \in P \} \subset G$ is the set of classes containing prime divisors (\cite[Theorem 3.4.10]{Ge-HK06a}).
However, the concept of transfer Krull monoids is neither restricted to the commutative nor to the cancellative setting (we refer to the survey \cite{Ge-Zh20a}; for non-commutative transfer Krull domains see \cite[Section 7]{Ba-Sm15} and \cite[Theorem 4.4]{Sm19a};  a commutative, but non-cancellative semigroup of modules over Bass rings, that is transfer Krull, is studied in \cite{Ba-Sm21}).

The next lemma (whose proof is straightforward) reveals that families $\mathcal L^*$, as in Theorem \ref{1.1}, cannot only be realized as systems of sets of lengths of Krull monoids, but also by wide classes of transfer Krull monoids. We will use Lemma \ref{3.2} in the proof of Corollary \ref{1.2}.

\medskip
\begin{lemma} \label{3.2}
Let $\mathcal L^*$ be a family of finite subsets of $\N_0$ satisfying Properties (a) and (b), given in Theorem \ref{1.1}. Let $H$ be a Krull monoid with transfer homomorphism as in \eqref{Krull-transfer} such that $\mathcal L (H) = \mathcal L^*$. If $H^*$ is a transfer Krull monoid with transfer homomorphism $\theta^* \colon H^* \to \mathcal B (G_P)$, then $\mathcal L (H^*) = \mathcal L^*$.
\end{lemma}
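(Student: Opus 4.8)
The plan is to invoke nothing beyond the displayed fact \eqref{equality-of-systems}: a transfer homomorphism between atomic monoids preserves sets of lengths, hence it preserves systems of sets of lengths. All the monoids involved are atomic --- $H$ is Krull, $\mathcal B (G_P)$ is Krull (being the monoid of zero-sum sequences over a subset of an abelian group), and $H^*$ is atomic by the very definition of a transfer Krull monoid, since a transfer homomorphism is required to be a map between atomic monoids. In particular $\mathcal L (\mathcal B (G_P))$ is well-defined.

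First I would apply \eqref{equality-of-systems} to the transfer homomorphism $\theta \colon H \to \mathcal B (G_P)$ of \eqref{Krull-transfer}, obtaining
\[
\mathcal L \big( \mathcal B (G_P) \big) = \mathcal L (H) = \mathcal L^* \,.
\]
Next I would apply \eqref{equality-of-systems} to the given transfer homomorphism $\theta^* \colon H^* \to \mathcal B (G_P)$, obtaining $\mathcal L (H^*) = \mathcal L \big( \mathcal B (G_P) \big)$. Chaining these two identities yields $\mathcal L (H^*) = \mathcal L \big( \mathcal B (G_P) \big) = \mathcal L^*$, which is the assertion.

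There is no genuine obstacle here: the whole content of the lemma is that the two transfer homomorphisms $\theta$ and $\theta^*$ have literally the same target monoid $\mathcal B (G_P)$, so that $H$ and $H^*$ both inherit the system of sets of lengths of this common target. The only point worth a word is the atomicity of $\mathcal B (G_P)$, which is needed for $\mathcal L (\mathcal B (G_P))$ to make sense and which holds because Krull monoids are atomic; this is also why the parenthetical remark in the statement calls the proof straightforward.
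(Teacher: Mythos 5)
Your proposal is correct and is essentially the paper's own proof: both apply Equation \eqref{equality-of-systems} twice, once to $\theta \colon H \to \mathcal B (G_P)$ and once to $\theta^* \colon H^* \to \mathcal B (G_P)$, and chain the equalities $\mathcal L^* = \mathcal L (H) = \mathcal L \big( \mathcal B (G_P) \big) = \mathcal L (H^*)$. The extra remark on atomicity of $\mathcal B (G_P)$ is harmless and correct.
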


\begin{proof}
Applying Equation \eqref{equality-of-systems} twice, we obtain that
\[
\mathcal L^* = \mathcal L (H) = \mathcal L \big( \mathcal B (G_P) \big) = \mathcal L (H^*) \,. \qedhere
\]
\end{proof}

\medskip
\begin{proposition} \label{3.3}
Let $H$ be a  monoid.
\begin{enumerate}
\item If $H_{\red}$ is finitely generated, then there exist $a_1^*, \ldots, a_m^* \in H \setminus H^{\times}$ such that for every $a \in H$ there is $i \in [1, m]$ such that
$\mathsf L (a) = \mathsf L (a_i^*) +  \mathsf L \big( (a_i^*)^{-1} a \big)$.

\item Let $\theta \colon H \to \mathcal B (G_0)$ be a transfer homomorphism, where $G_0$ is a finite subset of an abelian group. Then there exist $a_1^*, \ldots, a_m^* \in H \setminus H^{\times}$ with the following property: for every $a \in H$ there are $i \in [1, m]$ and $a_i' \in H$ such that $a_i' \mid a$, $\theta (a_i') = \theta (a_i^*)$, and $\mathsf L (a) = \mathsf L (a_i^*) +  \mathsf L \big( (a_i')^{-1} a \big)$.
\end{enumerate}
Moreover, if $\mathcal L (H)$ is additively closed, then (both, in 1. as well as in 2.) $\mathcal L (H)$ is a finitely generated semigroup.
\end{proposition}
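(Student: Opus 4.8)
The plan is the following. Since sets of lengths depend only on the associated reduced monoid, I would assume throughout that $H$ is reduced; I would also assume that $H$ has a non-unit (otherwise $\mathcal L(H)=\{\{0\}\}$ and nothing has to be proved) and treat only non-units $a$ (for a unit $a$ one has $\mathsf L(a)=\{0\}$ trivially). Write $F=\mathsf Z(H)=\mathcal F(\mathcal A(H))$; as $H$ is reduced and finitely generated, $\mathcal A(H)$ is finite, so $F$ is a free abelian monoid of finite rank $n=|\mathcal A(H)|$, with factorization homomorphism $\pi\colon F\to H$. For every $a\in H$ the fibre $\mathsf Z(a)=\pi^{-1}(a)$ is an antichain of $F$ (if $z<z'$ in $\mathsf Z(a)$ then $\pi(z'-z)=1$ with $z'-z\neq 0$, impossible in a reduced monoid) and hence, by Dickson's Lemma, finite; in particular $\max\mathsf L(a)$ is well defined.

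The first step is to introduce \emph{clean divisibility}: for $a,b\in H$ set $b\mid^{*}a$ if $b\mid a$ and $\mathsf Z(a)=\mathsf Z(b)+\mathsf Z(b^{-1}a)$ (sumset inside $F$). A short fibrewise computation shows that $b\mid^{*}a$ holds if and only if $b\mid a$ and every factorization of $a$ dominates (in $F$) some factorization of $b$; consequently $\mid^{*}$ is a partial order on $H$ with least element $1$, and since lengths add over sumsets in $F$, $b\mid^{*}a$ forces $\mathsf L(a)=\mathsf L(b)+\mathsf L(b^{-1}a)$. Call a non-unit of $H$ \emph{indecomposable} if it is minimal, among non-units, for $\mid^{*}$; equivalently, $a$ is indecomposable if and only if there is no factorization $a=bc$ with $b,c\in H\setminus\{1\}$ and $\mathsf Z(a)=\mathsf Z(b)+\mathsf Z(c)$. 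Since $\max\mathsf L$ adds along such a factorization and each factor is a non-unit, induction on $\max\mathsf L(a)$ gives that every non-unit of $H$ is $\mid^{*}$-above some indecomposable element. Thus, if $\{a_1^{*},\dots,a_m^{*}\}$ denotes the set of indecomposable elements, then every non-unit $a$ satisfies $a_i^{*}\mid^{*}a$ for some $i$, whence $\mathsf L(a)=\mathsf L(a_i^{*})+\mathsf L\big((a_i^{*})^{-1}a\big)$; so statement~1 follows, \emph{provided one shows that there are only finitely many indecomposable elements.}

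The finiteness of the set of indecomposable elements is the technical core. Assume, for contradiction, that there are infinitely many, and pick pairwise distinct indecomposable elements $a_1,a_2,\dots$. The key input is that the set of finite subsets of $F\cong\N_0^{\,n}$, ordered by the domination (Hoare) order $S\sqsubseteq T\iff(\forall t\in T)(\exists s\in S)\,s\le t$, is a well-quasi-order; this is a strengthening of Dickson's Lemma (it is essentially Maclagan's theorem that antichains of monomial ideals in $n$ variables are finite). Applying it to the sequence $\big(\mathsf Z(a_k)\big)_{k\ge 1}$ of finite antichains of $F$ yields indices $i<j$ with $\mathsf Z(a_i)\sqsubseteq\mathsf Z(a_j)$, i.e.\ every factorization of $a_j$ dominates a factorization of $a_i$; by the characterization of $\mid^{*}$ this gives $a_i\mid^{*}a_j$, and since the $a_k$ are distinct non-units, $a_i$ is a proper non-unit divisor of $a_j$ --- contradicting the indecomposability of $a_j$. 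I expect this well-quasi-ordering statement to be the main obstacle of the whole proof: the reductions and the transfer-homomorphism bookkeeping below are routine, whereas here one needs a wqo fact that goes slightly beyond the bare form of Dickson's Lemma, and a self-contained proof valid for all $n$ will require some care.

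It remains to reduce statement~2 to statement~1 and to prove the ``moreover'' clauses. For statement~2, note that $B=\mathcal B(G_0)$ is a reduced Krull monoid which is finitely generated (e.g.\ by Lemma~\ref{2.1}, since it has only finitely many minimal prime $s$-ideals); apply statement~1 to $B$ to get the indecomposable elements $b_1^{*},\dots,b_m^{*}\in B\setminus\{1\}$ of $B$, and lift each to some $a_i^{*}\in H$ with $\theta(a_i^{*})=b_i^{*}$ (possible since, by (T\,1), $\theta$ is onto $B$ as $B^{\times}=\{1\}$; and $a_i^{*}$ is a non-unit because $\theta^{-1}(B^{\times})=H^{\times}$). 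Given a non-unit $a\in H$, pick $i$ with $b_i^{*}\mid\theta(a)$ and $\mathsf L(\theta(a))=\mathsf L(b_i^{*})+\mathsf L\big((b_i^{*})^{-1}\theta(a)\big)$; writing $\theta(a)=b_i^{*}\gamma$ and applying (T\,2) to this factorization yields $a_i',a_i''\in H$ with $a=a_i'a_i''$, $\theta(a_i')=b_i^{*}=\theta(a_i^{*})$ and $\theta(a_i'')=\gamma$. Since transfer homomorphisms preserve sets of lengths, $\mathsf L(a)=\mathsf L(\theta(a))=\mathsf L(b_i^{*})+\mathsf L(\gamma)=\mathsf L(a_i^{*})+\mathsf L\big((a_i')^{-1}a\big)$, as required. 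Finally, for the ``moreover'' clause in either case, assume $\mathcal L(H)$ is additively closed; iterating the identity $\mathsf L(a)=\mathsf L(a_i^{*})+\mathsf L\big((a_i')^{-1}a\big)$ with $a$ replaced by $(a_i')^{-1}a$ (in case~1, $a_i'=a_i^{*}$) terminates, because $a_i'$ is a non-unit dividing $a$ so $\max\mathsf L\big((a_i')^{-1}a\big)<\max\mathsf L(a)$; this exhibits every $\mathsf L(a)$ as a finite sum of the sets $\mathsf L(a_1^{*}),\dots,\mathsf L(a_m^{*})\in\mathcal L(H)$, so these finitely many sets generate the semigroup $\mathcal L(H)$.
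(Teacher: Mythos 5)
Your argument is correct, but for the heart of the matter — part 1 — it takes a genuinely different route from the paper. You work with the stronger relation ``$\mathsf Z(a)=\mathsf Z(b)+\mathsf Z(b^{-1}a)$'' and show that its minimal non-units suffice and that there are only finitely many of them, the finiteness being delegated to the fact that finite subsets of $\N_0^n$ under the domination order form a well-quasi-order, equivalently (via minimal generating sets) Maclagan's theorem that antichains of monomial ideals are finite; your translation of that theorem into the needed statement, the characterization of your relation by domination of factorizations, and the induction on $\max \mathsf L(a)$ are all sound, and your argument even yields a splitting of the whole set of factorizations rather than just of the set of lengths. The paper instead works with the weaker, purely length-theoretic set $S^*=\{a : \mathsf L(b)+\mathsf L(ab^{-1})\subsetneq\mathsf L(a)$ for every divisor $b$ with $\mathsf L(b)\neq\mathsf L(a)\}$ and proves its finiteness by contradiction, combining the structure theorem for sets of lengths of finitely generated monoids (almost arithmetical multiprogressions with a global bound, \cite[Theorem 4.4.11]{Ge-HK06a}) with a Dickson-type argument in the finitely generated monoid $H_0=\{(x,y)\in\mathsf Z(H)\times\mathsf Z(H):\pi(x)=\pi(y)\}$ to compare maxima and minima of lengths. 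So the trade-off is: your proof avoids the structure theory of sets of lengths but hinges on a nontrivial external combinatorial theorem which you cite rather than prove (you rightly flag this as the main obstacle; as a citation it is legitimate, since what you need is exactly Maclagan plus Dickson), whereas the paper stays within the standard factorization-theoretic toolkit of \cite{Ge-HK06a}. Your treatment of part 2 via {\bf (T\,1)}, {\bf (T\,2)} and length-preservation, and of the ``moreover'' clause by iterating the splitting (termination via $\max\mathsf L$ strictly decreasing), coincides with the paper's. One shared cosmetic point: for units $a$ the stated identity cannot hold with non-units $a_i^*$, so, as you do explicitly and the paper does implicitly, the statement is to be read for non-units; also, your order is the Smyth rather than the Hoare order, and your justification that $\mathcal B(G_0)$ is finitely generated is more directly obtained from saturation in $\mathcal F(G_0)$ (\cite[Proposition 2.7.5]{Ge-HK06a}) or \cite[Theorem 3.4.2]{Ge-HK06a} than from Lemma \ref{2.1} — minor points that do not affect correctness.
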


\begin{proof}
1. Without restriction we may suppose that $H$ is reduced and finitely generated. We define $$S^*=\{a\in H\colon \text{ for any }b\in H \text{ with }b\t a \text{ and }\mathsf L(b)\neq \mathsf L(a),\, \text{we have } \mathsf L(b)+\mathsf L(ab^{-1}) \subsetneq \mathsf L (a) \}\,,$$
and observe  that $\mathcal A(H)\subset S^*$.
Suppose that $S^*$ is finite. We show that for every $a\in H$, there exists $a^*\in S^*$ such that $\mathsf L (a) = \mathsf L (a^*) +  \mathsf L \big( (a_i^*)^{-1} a \big)$. If $a\in S^*$, then it is trivial.
Suppose $a\in H\setminus S^*$. Then there exists $b_1\in H$ with $b_1\t a$ and $\mathsf L(b_1)\neq \mathsf L(a)$ such that $\mathsf L(a)=\mathsf L(b_1)+\mathsf L(ab_1^{-1})$.
Since $\mathsf L(a)$ is finite, there are $k \in \N$ and $b_1,b_2,\ldots, b_k\in H$ with $b_{i+1}\t b_i$ for all $i\in [1,k-1]$ such that $b_k\in S^*$ and $$\mathsf L(a)=\mathsf L(b_k)+\mathsf L(b_{k-1}b_k^{-1})+\ldots+\mathsf L(b_1b_2^{-1})+\mathsf L(ab_1^{-1})\subset \mathsf L(b_k)+\mathsf L(ab_k^{-1})\subset \mathsf L(a)\,,$$
whence the assertion follows.

Assume to the contrary that $S^*$ is infinite. The set of distances of finitely generated monoids is finite and their sets of lengths are well-structured. Indeed, there is a bound $M \in \N$ such that for every $a \in H$ there are $d \in \Delta (H)$ and a set $\mathcal D$ with $\{0,d\} \subset \mathcal D \subset [0,d]$ such that
\begin{equation} \label{structure}
\mathsf L (a) = y + \big(L' \cup L^* \cup (\max L^* + L'') \big) \subset y + \mathcal D + d \Z \,,
\end{equation}
where $y \in Z$, $L' \subset [-M, -1]$,  $L'' \subset [1,M]$, and $L^* = \mathcal D +d\cdot [0,\ell]$ for some $\ell \in \N_0$ (this is the "in particular" statement of \cite[Theorem 4.4.11]{Ge-HK06a}).
Since $\Delta (H)$ is finite,
 there exist  $d\in \Delta (H)$, $\{0,d\}\subset\mathcal D\subset [0,d]$, $L'\subset [-M,-1]\cap d\Z$, and  $L''\subset [1,M]\cap d\Z$ such that for   an infinite subset $S^{**}\subset S^*$ and all $a \in S^{**}$
 \begin{equation}\label{eq-1}
\mathsf L(a) = y + (L' \cup L^*\cup (\max L^*+L'')) \subset y + \mathcal D + d \Z \,,
 \end{equation}
where  $\ell\in \N_{\ge \max\Delta(H)}$ and all other parameters as in \eqref{structure}.

We define $H_0=\{(x,y)\in \mathsf Z(H)\times \mathsf Z(H)\colon \pi(x)=\pi(y)\}$ and observe that $H_0$ is a saturated submonoid of the finitely generated monoid $\mathsf Z(H)\times \mathsf Z(H)$, whence  $H_0$ is a finitely generated monoid by \cite[Proposition 2.7.5]{Ge-HK06a}.  We set $\mathcal A(H_0)=\{(x_i,y_i)\colon i\in [1,t]\}$ with $t\in \N$.

For every $a\in S^{**}$, we let $(z_a,w_a)\in H_0$ with $\pi(z_a)=a$, $|z_a|=\max\mathsf L(a)$, and $|w_a|=\min\mathsf L(a)$. Then there exist $k_{a,1},\ldots, k_{a,t}\in \N_0$ such that $(z_a,w_a)=\prod_{i=1}^t(x_i,y_i)^{k_{a,i}}$, whence
\[
\max \mathsf L(a)=|z_a|=\sum_{i=1}^tk_{a,i}|x_i|=\sum_{i=1}^tk_{a,i}\max\mathsf L(\pi(x_i))
\]
and
\[
\min \mathsf L(a)=|w_a|=\sum_{i=1}^tk_{a,i}|y_i|=\sum_{i=1}^tk_{a,i}\min\mathsf L(\pi(x_i))\,.
\]
Since $S^{**}$ is  infinite, there exist  $b,c\in S^{**}$ with $\mathsf L(b)\neq \mathsf L(c)$ such that $k_{b,i}\le k_{c,i}$ for all $i\in [1,t]$. Then $b$ divides $c$ and so $\mathsf L (b ) + \mathsf L (cb^{-1}) \subset \mathsf L (c)$, whence
\begin{align*}
\max\mathsf L(b)+\max\mathsf L(cb^{-1})&\le \max\mathsf L(c)=\sum_{i=1}^tk_{c,i}|x_i|\le  \max\mathsf L(b)+\sum_{i=1}^t(k_{c,i}-k_{b,i})|x_i| \\
  & \le \max\mathsf L(b)+\max\mathsf L(cb^{-1}) \\
\text{and} & \\
\min\mathsf L(b)+\min\mathsf L(cb^{-1})&\ge \min\mathsf L(c)=\sum_{i=1}^tk_{c,i}|y_i|\ge  \min\mathsf L(b)+\sum_{i=1}^t(k_{c,i}-k_{b,i})|y_i| \\ & \ge \min\mathsf L(b)+\min\mathsf L(cb^{-1})\,.
\end{align*}
Therefore, we obtain
\begin{equation}\label{eq-2}
\max\mathsf L(c)=\max\mathsf L(b)+\max\mathsf L(cb^{-1})
\ \text{ and }\ \min\mathsf L(c)=\min\mathsf L(b)+\min\mathsf L(cb^{-1})\,.
\end{equation}
In view of \eqref{eq-1}, we have $$\mathsf L(b)=y_b+(L'\cup L_b\cup (\max L_b+L'')) \subset y_b + \mathcal D + d \Z $$ and $$\mathsf L(c)=y_c+(L'\cup L_c\cup (\max L_c+L'')) \subset y_c + \mathcal D + d \Z \,,$$ where $y_b,y_c\in \N_0$, $L_b=\mathcal D+d\cdot [0,\ell_b]$, $L_c=\mathcal D+d\cdot[0,\ell_c]$, and $\ell_b,\ell_c\in \N_{\ge \max\Delta(H)}$.
It follows by \eqref{eq-2} that $\{y_c-y_b, y_c-y_b+\max L_c-\max L_b\}\subset \mathsf L(cb^{-1})$. Therefore
\begin{equation}\label{eq-3}
	\begin{aligned}
	&	y_c+L'=y_c-y_b+y_b+L'\subset \mathsf L(cb^{-1})+\mathsf L(b) \text{ and}\\
	&	 y_c+\max L_c+L''=(y_c-y_b+\max L_c-\max L_b)+(y_b+\max L_b+L'')\subset \mathsf L(cb^{-1})+\mathsf L(b)\,.	
		\end{aligned}
	\end{equation}
Suppose $\mathsf L(cb^{-1})=\{y_c-y_b=n_1, n_2,\ldots, n_{|L_0|}=y_c-y_b+\max L_c-\max L_b\}$ with $n_i<n_j$ if $i<j$. Then
\[
\bigcup_{i=1}^{|L_0|}n_i+\mathcal D+d\cdot[0, \ell_b] = \mathsf L(cb^{-1})+L_b \subset \mathsf L (cb^{-1}) + (-y_b + \mathsf L (b)) \subset -y_b + \mathsf L (c) \subset (y_c-y_b)+ \mathcal D+d\Z \,.
\]
Since $\ell_b\ge \max\Delta(H)$, we have $n_i+\max L_b\ge n_{i+1}+\min L_b$ for every $i\in [0, |L_0|-1]$. It follows that $\mathsf L(cb^{-1})+L_b=y_c-y_b+L_c$ (because $y_c-y_b+\max L_c-\max L_b = \max \mathsf L (cb^{-1})$), whence
\begin{equation}
y_c + L_c = y_b +L_b + \mathsf L (cb^{-1}) \subset \mathsf L (b) + \mathsf L (cb^{-1}) \,.
\end{equation}
By this inclusion together with \eqref{eq-3}, we obtain that $\mathsf L(c)\subset \mathsf L(b)+\mathsf L(cb^{-1})$ and hence $\mathsf L(c)=\mathsf L(b)+\mathsf L(cb^{-1})$, a contradiction to  $c \in S^*$.

\smallskip
2.  Since $G_0$ is finite, $\mathcal B (G_0)$ is finitely generated by \cite[Theorem 3.4.2]{Ge-HK06a}. Thus 1. implies that there are $A_1^* , \ldots, A_m^* \in \mathcal B (G_0)$ such that for every $A \in \mathcal B (G_0)$ we have $\mathsf L (A) = \mathsf L (A_i^*) +  \mathsf L \big( (A_i^*)^{-1} A \big)$. We choose $a_1^*, \ldots, a_m^* \in H$ with $\theta (a_i^*) =A_i^*$. Let $a \in H$. Then there is $i \in [1,m]$ such that $\mathsf L ( \theta (a) ) = \mathsf L (A_i^*) +  \mathsf L \big( (A_i^*)^{-1} \theta (a) \big)$. By {\bf T2}, there are $a_i', b \in H$ such that $a = a_i'b$, $\theta (a_i') = A_i^*$, and $\theta (b) = (A_i^*)^{-1} \theta (a)$. Thus, we obtain that
\[
\mathsf L (a) = \mathsf L \big( \theta (a) \big) = \mathsf L (A_i^*) +  \mathsf L \big( (A_i^*)^{-1} \theta (a) \big) = \mathsf L (a_i') + \mathsf L (b) \,.
\]

Suppose that  $\mathcal L (H)$ is additively closed. Then $\mathcal L (H)$ is a semigroup with set addition as operation, and   $\{ \mathsf L (a_1^*), \ldots, \mathsf L (a_m^*) \}$ is a generating set of $\mathcal L (H)$.
\end{proof}

\bigskip
\begin{proof}[Proof of Theorem \ref{1.1}]
Let $\mathcal L^*$ be a family of finite subsets of $\N_0$ such that
\begin{itemize}
\item[(a)] $\{0\}, \{1\} \in \mathcal L^*$ and all other sets of $\mathcal L^*$ lie in $\N_{\ge 2}$.

\item[(b)] If $L_1, L_2 \in \mathcal L^*$, then $L_1 + L_2 \in \mathcal L^*$.
\end{itemize}
This means that $\mathcal L^*$ is a commutative semigroup with set addition as operation and with $\{0\}$ being the identity element. Let $A = \{A_i \colon i \in I \}$ be the set of indecomposable elements of $\mathcal L^*$. Proposition \ref{3.1} implies that, for every $i \in I$, there is a finitely generated Krull monoid $H_i$ such that
\[
\mathcal L (H_i) = \big\{ \{0\}, \{1\} \big\} \cup \{ y + n A_i \colon y, n \in \N_0 \} \,.
\]
We set $H = \coprod_{i \in I} H_i$ and note that
\[
\mathcal L (H) = \big\{ \sum_{i \in I} L_i \colon L_i \in \mathcal L (H_i) \ \text{and all but finitely many $L_i$ are equal to $\{0\}$} \big\} \,.
\]
Since $\mathcal L^*$ is a semigroup and $A$ is the   set of atoms, we infer that $\mathcal L^* = \mathcal L (H)$.
If $A$ is finite, then $H$ is finitely generated because all $H_i$ are finitely generated. Conversely, suppose that $H$ is finitely generated. Then, by \eqref{Krull-transfer} and Lemma \ref{2.1},  there is a transfer homomorphism $\theta \colon H \to \mathcal B (G_P)$, where $G_P \subset \mathcal C (H)$ is  finite. Thus Proposition \ref{3.3} implies that $\mathcal L (H) = \mathcal L^*$ has only finitely many indecomposable sets.
\end{proof}

\bigskip
\begin{proof}[Proof of Corollary \ref{1.2}]
Let $\mathcal L^*$ be a family of finite subsets of $\N_0$ satisfying Properties (a) and (b), given in Theorem \ref{1.1}. Now, by Theorem \ref{1.1}, there is a Krull monoid $H$ with divisor theory $\partial \colon H \to \mathcal F (P)$ such that $\mathcal L^* = \mathcal L (H)$, and we may suppose that $H$ is reduced. By \eqref{Krull-transfer}, there is a transfer homomorphism $\theta \colon H \to \mathcal B (G_P)$, where $G_P \subset \mathcal C (H)$ is the set of classes  containing prime divisors. By Claborn's Realization Theorem (\cite[Theorem 3.7.8]{Ge-HK06a}), there is a Dedekind domain $D$ and an isomorphism
\[
\Phi \colon G \to \mathcal C (D) \quad \text{such that} \quad \Phi (G_P) = \{ g \in \mathcal C (D) \colon \mathfrak X (D) \cap g \ne \emptyset \} \,,
\]
where $\mathcal C (D)$ is the class group of $D$ and $\mathfrak X (D)$ is the set of height-one prime ideals of $D$. Since $D$ is a Krull domain and $D^{\bullet} = D \setminus \{0\}$ is a Krull monoid, we have, again by \eqref{Krull-transfer}, a transfer homomorphism $\theta^* \colon D^{\bullet} \to \mathcal B \big( \Phi (G_P) \big) \cong \mathcal B (G_P)$. Thus, Lemma \ref{3.2} implies that $\mathcal L^* =  \mathcal L (D)$.

If $\mathcal L^*$ has only finitely many indecomposable sets, the  monoid $H$ is finitely generated by Theorem \ref{1.1}, whence the set of prime divisors $P$ is finitely generated by Lemma \ref{2.1}. Thus $G_P$ and $\Phi (G_P)$ are finite.

Conversely, suppose that there is a Dedekind domain $D^*$ with $\mathcal L (D^*) = \mathcal L^*$ and whose set of classes $G_0^* \subset \mathcal C (D^*)$ containing height-one prime ideals is finite. Then $\mathcal B (G_0^*)$ is finitely generated and $\mathcal L^* = \mathcal L (D^*) = \mathcal L \big( \mathcal B (G_0^*) \big)$, whence $\mathcal L^*$ has only finitely many indecomposable sets by Proposition \ref{3.3}.
\end{proof}

\medskip
\noindent
{\bf Acknowledgement} We thank the referees for the careful reading. Their comments helped to improve the presentation of the paper.

\providecommand{\bysame}{\leavevmode\hbox to3em{\hrulefill}\thinspace}
\providecommand{\MR}{\relax\ifhmode\unskip\space\fi MR }
\providecommand{\MRhref}[2]{%
  \href{http://www.ams.org/mathscinet-getitem?mr=#1}{#2}
}
\providecommand{\href}[2]{#2}

\end{document}